\documentclass[12pt,a4paper]{amsart}
\usepackage[english]{babel}
\usepackage{ucs}
\usepackage[utf8x]{inputenc}
\usepackage{graphicx,psfrag}
\usepackage{moreverb}
\usepackage{amsmath,amssymb,amsfonts,amsthm,latexsym}
\usepackage{cases}
\usepackage{moreverb} 
\usepackage{float}	
\usepackage{enumitem}
\usepackage{tikz}
\usetikzlibrary{calc}

\textwidth=6.5in \hoffset=-0.5in \textheight=8.5in
\voffset=-0.5in

\setlength{\parskip}{ 0.5ex plus 0.2ex minus 0.2ex}

\newtheorem{theorem}{Theorem}

\newtheorem{proposition}{Proposition}
\theoremstyle{definition}
\newtheorem*{definition}{Definition}


\newcommand{\bR}{\mathbb{R}}

\newcommand{\bZ}{\mathbb{Z}}
\newcommand{\bQ}{\mathbb{Q}}
\newcommand{\cC}{\mathcal{C}}
\newcommand{\cK}{\mathcal{K}}
\newcommand{\cB}{\mathcal{B}}
\newcommand{\uL}{\mathbf{L}}
\newcommand{\uP}{\mathbf{P}}
\newcommand{\ux}{\mathbf{x}}
\newcommand{\uy}{\mathbf{y}}

\newcommand{\uv}{\mathbf{v}}

 \title[Parametric geometry of numbers]
{On a Conjecture of Schmidt for the Parametric Geometry of Numbers}

\author{Aminata Keita}

\address{
   Department of Mathematics\\
   University of Ottawa\\
   585 King Edward\\
   Ottawa, Ontario K1N 6N5, Canada}
  \email{akeit104@uottawa.ca}
  
  \subjclass[2010]{11J04, 11J82}


\thanks{Work partially supported by NSERC (Canada)}

  \date{ \today}
  
\begin{document} 
\baselineskip=20pt

\begin{abstract}
With the help of the recently introduced parametric geometry of numbers by W.~M.~Schmidt and L.~Summerer, we prove a strong version of a conjecture of  Schmidt concerning the successive minima of a lattice.
\end{abstract}
\maketitle

\section{Introduction}
Among the conjectures proposed by W.~M.~Schmidt in 1983, one is concerned with the parametric geometry of numbers \cite[Conjecture~2]{Sc}. This conjecture was proven in 2012 by N.~G.~Moshchevitin  \cite[Theorem~1]{Mg}. The goal of this paper is to prove a stronger statetement along the same lines and we will show that this generalization is the best possible. We start by recalling Moshchevitin's result, using the notations of D.~Roy in \cite{Rd}.

Fix an integer $n\geqslant2$. For each non-zero $\xi\in\bR^{n+1}$, we associate the family of convex bodies
\begin{equation*}
\label{c1}
 \mathcal{C}_{\xi}(Q) :=  \left\lbrace \ux \in \bR^{n+1}\,;\, \Vert \ux \Vert \leqslant 1\,,\,|\ux\cdot \xi| \leqslant Q^{-1} \right\rbrace
 \qquad (Q\geqslant 1),
\end{equation*} 
where $\ux\cdot \uy$ denotes the standard scalar product in $\bR^{n}$ and $||\ux|| = (\ux\cdot \ux)^{1/2}$ denotes the euclidean norm of $\ux$. Define
\begin{equation*}
L_{\xi,j}(q)= \log \lambda_{j}\left(\cC_{\xi}(e^{q}); \bZ^{n+1}\right) \qquad \quad (q\geqslant 0;\,\, 1\leqslant j \leqslant n+1) ,
\end{equation*}
where $\lambda_{j}(\cC;\Lambda)$ is defined for a convex body $\cC$ and lattice $\Lambda$ in $\bR^{n+1}$ to be the $j$-th minimum of $\cC$ with respect to $\Lambda$, i.e.\ the smallest $\lambda\geqslant0$ such that $\lambda\cC$ contains at least $j$ linearly independent elements of $\Lambda$. Clearly, we have
\begin{equation*}
\label{m4}
  L_{\xi,1}(q)\leqslant \cdots \leqslant L_{\xi,n+1}(q)  \qquad (q\geqslant 0).
\end{equation*}

The functions $ L_{\xi,j} :[0,\infty)\longrightarrow \bR$ $(1\leqslant j \leqslant n+1 )$ are continuous and piecewise linear, with slopes alternating between $0$ and $1$ (see \cite[$\mathsection 2$]{Rd}, \cite[$\mathsection 3$]{S2013a}). Moreover, since the volume of $\cC_{\xi}(e^{q})$ is bounded below and above by multiples of $e^{-q}$, Minkowski's theorem implies that  
\[
q- \sum\limits_{j=1}^{n+1} L_{\xi,j}(q)
\]
is a bounded function in $q$, and so the average of the $L_{\xi,j}$'s is $q/(n+1)$. If the coordinates of $\xi$ are linearly independent over $\bQ$, then for each $j=1,\ldots,n+1$, there exists arbitrarily large values of $q$ such that
\[
L_{\xi,j}(q)=L_{\xi,j+1}(q)
\]
(see \cite[Theorem~1]{S2009}). On the other hand, we have the following result.

\begin{theorem}[N.~G.~Moshchevitin, $2012$]
\label{th1}
For each integer $k$ with $2\leqslant k\leqslant n$, there exists $\xi\in\bR^{n+1}$ whose coordinates are linearly independent over $\bQ$ such that
\begin{equation*}
\lim_{q\rightarrow \infty}\left( L_{\xi,k-1}(q)-\frac{q}{n+1}\right) = -\infty \quad \text{and} \quad \lim_{q\rightarrow \infty} 
\left( L_{\xi,k+1}(q)-\frac{q}{n+1}\right) = \infty.
\end{equation*} 
\end{theorem}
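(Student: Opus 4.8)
The plan is to translate the statement, through the correspondence of Schmidt--Summerer and Roy \cite{Rd}, into a combinatorial question about $n$-systems, answer that question by an explicit construction, and translate back. Recall that an \emph{$n$-system} on $[q_{0},\infty)$ is a tuple $\uP=(P_{1},\dots,P_{n+1})$ of continuous, piecewise linear functions with $P_{1}\le\cdots\le P_{n+1}$, $P_{1}+\cdots+P_{n+1}=q$, all slopes in $\{0,1\}$ (so that exactly one component is active between consecutive division points), subject to the admissibility conditions at the division points; and that, by Roy \cite{Rd}, every $n$-system lies within bounded distance of some $\uL_{\xi}=(L_{\xi,1},\dots,L_{\xi,n+1})$, with $\xi$ allowed to have $\bQ$-linearly independent coordinates as soon as, in addition, each component of $\uP$ tends to $+\infty$ and each consecutive pair $P_{j},P_{j+1}$ $(1\le j\le n)$ satisfies $P_{j}(q)=P_{j+1}(q)$ for arbitrarily large $q$ — this last being the $n$-system counterpart of the theorem of Schmidt \cite{S2009} recalled above. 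Hence it is enough to construct an $n$-system with these two extra properties and with
\[
\lim_{q\to\infty}\Bigl(P_{k-1}(q)-\tfrac{q}{n+1}\Bigr)=-\infty\quad\text{and}\quad\lim_{q\to\infty}\Bigl(P_{k+1}(q)-\tfrac{q}{n+1}\Bigr)=+\infty .
\]

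Because $P_{k-1}$ must meet $P_{k}$, and $P_{k}$ must meet $P_{k+1}$, for arbitrarily large $q$, one cannot simply run $P_{1},\dots,P_{k-1}$ along a line of slope $<\tfrac1{n+1}$ and $P_{k+1},\dots,P_{n+1}$ along a line of slope $>\tfrac1{n+1}$: since $P_{1}+\cdots+P_{n+1}=q$, the middle component $P_{k}$ would then also be pinned to a line, and one of the gaps $P_{k}-P_{k-1}$, $P_{k+1}-P_{k}$ would be unbounded. The remedy is to let $P_{k}$ oscillate across the wedge separating the low from the high indices. Concretely, put $g(q)=\tfrac{q}{2(n+1)}$ and let $r\colon[q_{0},\infty)\to\bR$ be continuous and piecewise linear, taking each of the values $\tfrac{k-1}{n+2-k}$ and $\tfrac{k}{n+1-k}$ for arbitrarily large $q$, remaining in the closed interval between them, and oscillating slowly enough for the slope estimate below; set $h=gr$ and consider the target
\[
\bar P_{j}=\tfrac{q}{n+1}-g\ \ (j<k),\qquad \bar P_{k}=\tfrac{q}{n+1}+(k-1)g-(n+1-k)h,\qquad \bar P_{j}=\tfrac{q}{n+1}+h\ \ (j>k).
\]
Elementary inequalities — which reduce exactly to $r$ lying in the stated interval — give $\bar P_{1}\le\cdots\le\bar P_{n+1}$; one has $\sum_{j}\bar P_{j}=q$ by construction; for $r$ slowly varying each $\bar P_{j}$ is nondecreasing with slope in $(0,1)$; each $\bar P_{j}\to+\infty$; and the two endpoints of the interval for $r$ give $\bar P_{k-1}(q)=\bar P_{k}(q)$, respectively $\bar P_{k}(q)=\bar P_{k+1}(q)$, for arbitrarily large $q$. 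Since $\bar P_{k-1}-\tfrac{q}{n+1}=-g\to-\infty$ and $\bar P_{k+1}-\tfrac{q}{n+1}=h\ge\tfrac{k-1}{n+2-k}\,g\to+\infty$, the target has the right behaviour. By \cite{Rd}, $\bar P$ lies within bounded distance of $\uL_{\xi}$ for some $\xi$ with $\bQ$-linearly independent coordinates; reading off $j=k-1$ and $j=k+1$ then yields the theorem.

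The order inequalities for $\bar P$, the slope bounds, and the appeal to the precise form of the correspondence in \cite{Rd} are routine. The main obstacle is the point isolated above: to avoid the two-block construction and instead tune the single oscillating component $P_{k}$ so that, at the same time, the tuple is an admissible $n$-system at every division point, the quantities $P_{k-1}-\tfrac{q}{n+1}$ and $P_{k+1}-\tfrac{q}{n+1}$ diverge to $\mp\infty$ genuinely and not merely along a subsequence, and the system remains realizable by a $\xi$ with $\bQ$-linearly independent coordinates, i.e.\ $P_{k}$ keeps returning to \emph{both} of its neighbours. Identifying the admissible range $\bigl[\tfrac{k-1}{n+2-k},\tfrac{k}{n+1-k}\bigr]$ for the ratio $h/g$ is precisely what makes these three requirements compatible; granting it, the remainder is bookkeeping together with the invocation of \cite{Rd}.
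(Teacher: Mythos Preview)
Your overall strategy --- build a template with the desired asymptotics and invoke Roy's correspondence --- is the same as the paper's, but there is a genuine gap at the sentence ``By \cite{Rd}, $\bar P$ lies within bounded distance of $\uL_\xi$.'' Roy's theorem in \cite{Rd} takes as input an $(n+1)$-system: a tuple whose component slopes lie in $\{0,1\}$, with exactly one component of slope $1$ on each interval between consecutive division points, subject to the switching conditions you yourself recall. Your target $\bar P$ has \emph{every} component with slope strictly between $0$ and $1$ throughout (for instance $\bar P_1$ has constant slope $1/(2(n+1))$), so it is neither an $(n+1)$-system nor a generalized $(n+1)$-system in the sense of \cite{Rp}, and neither theorem applies to it as stated. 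What you dismiss as ``routine bookkeeping'' --- producing from $\bar P$ an honest system at bounded distance --- is precisely the combinatorial content that cannot be skipped.

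The paper does not prove Theorem~\ref{th1} separately; it derives it from the stronger Theorem~\ref{th2} (the implication is immediate since $1/(n-k+2)>1/(n+1)$). For Theorem~\ref{th2} the paper constructs, in Propositions~\ref{pr1}--\ref{pr4}, an explicit generalized $(n+1)$-system $\uP$ with $P_1=\cdots=P_{k-1}=A$, $P_k=B$, $P_{k+1}=\cdots=P_{n+1}=C$, where $A,B,C$ are piecewise linear with slopes in $\{0,\tfrac{1}{k-1}\}$, $\{0,1\}$, $\{0,\tfrac{1}{n+1-k}\}$ respectively; conditions (G1)--(G3) are verified at each division point $r_m,s_m,t_m$, and only then is Roy's result from \cite{Rp} invoked. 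Your oscillating middle component $\bar P_k$ and the paper's $B$ play the same conceptual role, and your range $[\tfrac{k-1}{n+2-k},\tfrac{k}{n+1-k}]$ for $r=h/g$ correctly encodes when $\bar P_k$ meets each neighbour --- but the paper's $B$ is a legitimate component of a generalized system while your $\bar P_k$ is not. To close the gap you would need an auxiliary lemma, not contained in \cite{Rd}, that every ordered tuple summing to $q$ with components of slope in $[0,1]$ lies within bounded distance of an $(n+1)$-system.
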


Thus, the functions $L_{\xi,k-1}(q)$ and $L_{\xi,k+1}(q)$ can diverge from each other on each side by $q/(n+1)$. Our main result improves upon these estimates, and is stated as follows.

\begin{theorem}
\label{th2}
For each integer $k$ with $2\leqslant k\leqslant n$, there exist uncountably many vectors $\xi\in\bR^{n+1}$ whose coordinates are linearly independent over $\bQ$ such that
\begin{equation*}
\label{ee}
\lim_{q\rightarrow \infty} \frac{L_{\xi,k-1}(q)}{q} =0 \quad \text{and} \quad  \liminf_{q\rightarrow \infty} \frac{L_{\xi,k+1}(q)}{q} = \frac{1}{n-k+2}.
\end{equation*}
\end{theorem}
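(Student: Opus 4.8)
The plan is to pass to the combinatorial model of Schmidt, Summerer and Roy and to construct by hand an $(n+1)$-system with the prescribed asymptotics. Recall from \cite{Rd} that $q\mapsto\uL_\xi(q)=(L_{\xi,1}(q),\dots,L_{\xi,n+1}(q))$ coincides, up to an additive $O(1)$, with some $(n+1)$-system $\uP=(P_1,\dots,P_{n+1})$ on an interval $[q_0,\infty)$: a continuous piecewise linear map with $P_1\le\cdots\le P_{n+1}$, all slopes in $\{0,1\}$, $P_1+\cdots+P_{n+1}=q$, and meeting Roy's transfer rule at each vertex; conversely every such system is the $\uL_\xi$ of some nonzero $\xi$, whose coordinates may be taken $\bQ$-linearly independent as soon as $\uP$ is proper in the (mild) sense required there. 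So it is enough to exhibit one proper $(n+1)$-system $\uP$ with $P_{k-1}(q)=o(q)$ and $\liminf_{q}P_{k+1}(q)/q=1/(n-k+2)$; uncountability will come from the freedom left in the construction. Note, incidentally, that once $P_{k-1}(q)=o(q)$ the coincidence $P_k(q)=P_{k+1}(q)$, which occurs at arbitrarily large $q$ for every $\xi$ with $\bQ$-linearly independent coordinates, forces $P_{k+1}(q)/q\le 1/(n-k+2)+o(1)$ at those $q$; so $1/(n-k+2)$ is the largest the $\liminf$ can be, which is the ``best possible'' half of the statement and fixes the target value.

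Set $m:=n-k+2\ge2$. The system is built over a very rapidly increasing sequence of checkpoints $q_0<b_1<b_2<\cdots$. Over each epoch $[b_i,b_{i+1}]$, with $b_i\ll a_{i+1}\ll b_{i+1}$ for an intermediate time $a_{i+1}$, it behaves as follows. On $[b_i,a_{i+1}]$ the functions $P_1,\dots,P_k$ are frozen, $P_k$ sitting at the small level $\approx b_i/m$, while the $m-1$ functions $P_{k+1},\dots,P_{n+1}$ run a balanced staircase among themselves, so that $P_{k+1}(q)\approx q/(m-1)$, comfortably above $q/m$. Near $a_{i+1}$ one lets $P_{k-1}$ climb — this takes the single slope-$1$ token and briefly freezes everything above it — until $P_{k-1}=P_k$; since this happens only once $q$ is of size $\asymp a_{i+1}\gg b_i$, the level $P_{k-1}$ reaches is $o(q)$. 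From $a_{i+1}$ on, $P_k$ is released and the $m$ functions $P_k,\dots,P_{n+1}$ run a staircase that first lifts $P_k$ up to the level of the others and then converges, so that at $b_{i+1}$ one reaches the balanced configuration $P_1,\dots,P_{k-1}=o(b_{i+1})$ and $P_k=\cdots=P_{n+1}=b_{i+1}/m+o(b_{i+1})$. Inside the epoch one also routes the slope-$1$ token through the remaining consecutive pairs just long enough to make $P_j(q)=P_{j+1}(q)$ hold at arbitrarily large $q$ for each $j\in\{1,\dots,n\}$ (for $j=k$ these are the $b_i$). Here $P_k$ plays the part of a buffer — small at the meetings $P_{k-1}=P_k$, of size $\approx q/m$ at the meetings $P_k=P_{k+1}$ — which is harmless since the theorem says nothing about it, but it is exactly what forces the checkpoints to grow so fast.

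Three things then have to be checked. First, the structural constraints $P_1\le\cdots\le P_{n+1}$, slopes in $\{0,1\}$, $\sum_iP_i=q$: these reduce to scheduling the (unique) rising index correctly, the only delicate point being to keep $P_{k+1}$ flat but high while $P_{k-1}$ catches up, so that $P_{k+1}(q)\ge q/m-o(q)$ holds for all $q$ — a short computation shows each balanced-staircase piece stays on or above the line $y=q/m$ and each frozen piece stays strictly above it. Second, Roy's transfer rule at every vertex, especially across the junctions between the three regimes above, together with the verification that $\uP$ is proper. Third, the two limits: $P_{k-1}(q)/q\to0$ because $P_{k-1}$ rises by at most $o(b_i)$ per epoch and not until $q$ is comparable to the next checkpoint; and $\liminf_qP_{k+1}(q)/q=1/m$ because $P_{k+1}(q)\ge q/m-o(q)$ always, while $P_{k+1}(b_i)=b_i/m+o(b_i)$.

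The main obstacle I anticipate is the second point: checking Roy's combinatorial rule at all the gluing points while keeping $P_{k-1}$ strictly sublinear and $P_{k+1}$ at least $q/m-o(q)$ — two demands competing for the single slope-$1$ token that coexist only because the $b_i$ grow fast enough — and making sure the resulting system is proper. Finally, to obtain uncountably many $\xi$: at infinitely many epochs the construction contains a free parameter — say the precise placement and length of the $P_{k-1}$-climb and of the forced-coincidence stretches, or an extra dilation of the checkpoint spacing — that alters neither limit; distinct choices give $(n+1)$-systems at unbounded mutual distance, hence classes with disjoint preimages under $\xi\mapsto[\uL_\xi]$, and therefore a continuum of vectors $\xi$ with $\bQ$-linearly independent coordinates satisfying the conclusion.
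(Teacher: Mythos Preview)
Your plan is sound and would work, but the paper takes a shorter path that eliminates precisely the obstacle you flag. Rather than ordinary $(n+1)$-systems with slopes in $\{0,1\}$ from \cite{Rd}, the paper invokes Roy's \emph{generalized} $(n+1)$-systems from \cite{Rp}, in which a block of consecutive components may coincide and carry common slope $1/h$. With that device one simply sets
\[
P_1=\cdots=P_{k-1}=A,\qquad P_k=B,\qquad P_{k+1}=\cdots=P_{n+1}=C,
\]
where $(A,B,C)$ is a three-function system with respective slopes $1/(k-1)$, $1$, $1/(n-k+1)$, assembled by concatenating elementary blocks along a sequence $(a_m)_{m\in\bZ}$ with $a_{m+1}/a_m\to\infty$; one reads off $\limsup A(q)/q=0$ and $\liminf C(q)/q=1/(n-k+2)$ directly. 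The balanced staircases you need to emulate the bundled motion of $P_{k+1},\dots,P_{n+1}$ (and of $P_1,\dots,P_{k-1}$), together with the scheduling of the slope-$1$ token across the junctions, are thus absorbed into the very definition of a generalized system, and checking the transfer rule at the three switch points per block is a few lines. What you propose is essentially the unfolding of this generalized system into an ordinary one---valid, but carrying all the bookkeeping you anticipate. For uncountability the paper argues as you do: the one-parameter family $a_m^{(\theta)}=\theta\cdot 2^{m^3}$ yields systems at pairwise unbounded distance, hence pairwise distinct $\xi^{(\theta)}$. Linear independence of the coordinates follows simply from $P_1\to\infty$, so no separate ``properness'' check is required.
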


Further, this result is the best possible in the following sense.

\begin{theorem}
\label{th3}
Let $k$ be an integer with $2\leqslant k\leqslant n$, and suppose that $\xi$ is a point in $\bR^{n+1}$ whose coordinates are linearly independent over $\bQ$ and which satisfies  
\[
\underset{q\rightarrow\infty}{\lim} \frac{L_{\xi,k-1}(q)}{q} =0.
\]
Then, we have
\[
\underset{q\rightarrow\infty}{\liminf} \frac{L_{\xi,k+1}(q)}{q}\leqslant \frac{1}{n-k+2}.
\]
\end{theorem}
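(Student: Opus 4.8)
The plan is to argue by contradiction, using the parametric geometry of numbers of Schmidt, Summerer and Roy. First I would replace $(L_{\xi,1},\dots,L_{\xi,n+1})$ by a genuine system: by \cite{Rd} there is an $(n+1)$-system $\uP=(P_{1},\dots,P_{n+1})$ (in the sense of that paper) — continuous and piecewise linear on some $[q_{0},\infty)$, with $P_{1}\leqslant\cdots\leqslant P_{n+1}$, $\,P_{1}(q)+\cdots+P_{n+1}(q)=q$, and exactly one ``active'' coordinate of slope $1$ (the others of slope $0$) on each linear piece — such that $L_{\xi,j}(q)=P_{j}(q)+O(1)$ for all $q$ and all $j$. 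Since the coordinates of $\xi$ are $\bQ$-linearly independent, $L_{\xi,1}(q)\to\infty$ (once $q$ is large, $\lambda\,\cC_{\xi}(e^{q})$ meets $\bZ^{n+1}$ only in $0$ for any fixed $\lambda$), so $P_{1}(q)\to\infty$ and hence every $P_{j}(q)\to\infty$; combined with the hypothesis $L_{\xi,k-1}(q)/q\to0$ this gives $P_{j}(q)/q\to0$ for every $j\leqslant k-1$.

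Suppose now, for contradiction, that $\liminf_{q\to\infty}P_{k+1}(q)/q=\gamma>\tfrac1{m}$, where $m:=n-k+2\geqslant2$. Fix $\epsilon\in(0,\gamma-\tfrac1m)$, so that $P_{k+1}(q)\geqslant(\gamma-\epsilon)q$ for $q$ large; the same bound holds for each of the $m-1$ coordinates $P_{k+1}\leqslant P_{k+2}\leqslant\cdots\leqslant P_{n+1}$. Using $\sum_{j=1}^{n+1}P_{j}(q)=q$ and $\sum_{j<k}P_{j}(q)=o(q)$ we obtain
\[
P_{k}(q)\ \leqslant\ q-(m-1)(\gamma-\epsilon)q+o(q),
\]
and since $1-(m-1)\gamma<1-\tfrac{m-1}{m}=\tfrac1m<\gamma$, taking $\epsilon$ small enough produces constants $\delta,\delta'>0$ and a threshold $Q_{0}$ with
\[
P_{k}(q)\leqslant\Big(\tfrac1m-\delta\Big)q\quad\text{and}\quad P_{k+1}(q)-P_{k}(q)\geqslant\delta'q\qquad(q\geqslant Q_{0}).
\]
Thus the gap between the $k$-th and the $(k+1)$-th coordinate is positive and grows linearly.

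The key input is the combinatorial structure of $n$-systems: a rising coordinate cannot pass the slope $1$ to a strictly higher coordinate before catching up to it, so \emph{if the active index is $\leqslant k$ at some $q$ and becomes $\geqslant k+1$ later, then there is a transition point $q^{*}$ at which the active index increases from some $i\leqslant k$ to some $j\geqslant k+1$, and necessarily $P_{k}(q^{*})=P_{k+1}(q^{*})$} (the coordinates between $i$ and $j$ all coincide at $q^{*}$). Because $P_{k}\to\infty$, the index $k$ is active on some linear piece meeting $[Q_{0},\infty)$; pick $q_{1}\geqslant Q_{0}$ in the interior of such a piece. If the active index became $\geqslant k+1$ at some $q>q_{1}$ we would get $q^{*}\geqslant Q_{0}$ with $P_{k}(q^{*})=P_{k+1}(q^{*})$, contradicting $P_{k+1}(q^{*})-P_{k}(q^{*})\geqslant\delta'q^{*}>0$. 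Hence the active index stays $\leqslant k$ on all of $[q_{1},\infty)$, so none of $P_{k+1},\dots,P_{n+1}$ is ever active there and $\sum_{j=k+1}^{n+1}P_{j}$ is constant on $[q_{1},\infty)$; as $P_{k}\leqslant P_{k+1}$ it is bounded there too, while $\sum_{j<k}P_{j}=o(q)$. Adding up, $\sum_{j=1}^{n+1}P_{j}(q)=o(q)$, which is absurd since this sum equals $q$. Hence $\liminf_{q\to\infty}L_{\xi,k+1}(q)/q=\liminf_{q\to\infty}P_{k+1}(q)/q\leqslant\tfrac1{n-k+2}$, as claimed.

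The step that needs the most care is the combinatorial fact in the third paragraph — that an increase of the active index across the level $k\,|\,k+1$ can occur only at a transition point where $P_{k}$ and $P_{k+1}$ are equal — which should be read off either from Roy's description of $n$-systems in \cite{Rd} or directly from Schmidt and Summerer's analysis of the functions $L_{\xi,j}$ in \cite{S2013a} (the latter route carrying an $O(1)$ error that must be absorbed into the linearly growing gap); the rest is bookkeeping with Minkowski's second theorem and the ordering $P_{1}\leqslant\cdots\leqslant P_{n+1}$.
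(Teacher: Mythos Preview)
Your argument is correct, but the paper's proof is considerably shorter and does not pass through Roy's $(n+1)$-systems at all. The paper works directly with the $L_{\xi,j}$ and uses just two ingredients. First, from Minkowski's second theorem ($\sum_j L_{\xi,j}(q)=q+O(1)$), the ordering $L_{\xi,k}\leqslant L_{\xi,k+1}\leqslant\cdots\leqslant L_{\xi,n+1}$, and the hypothesis $\sum_{j<k}L_{\xi,j}(q)=o(q)$, one obtains
\[
(n-k+2)\,\overline{\varphi}_k(\xi)\ \leqslant\ \limsup_{q\to\infty}\frac{1}{q}\sum_{j=k}^{n+1}L_{\xi,j}(q)\ =\ 1.
\]
Second, the Schmidt--Summerer inequality $\underline{\varphi}_{k+1}(\xi)\leqslant\overline{\varphi}_k(\xi)$ from \cite{S2009} transfers this bound to $\underline{\varphi}_{k+1}$, and the theorem follows in three lines. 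Your contradiction argument is essentially a hands-on reproof of this last inequality via the combinatorics of $(n+1)$-systems: the observation that the active index cannot cross the level $k\mid k{+}1$ without $P_k$ and $P_{k+1}$ meeting is exactly the mechanism behind $\underline{\varphi}_{k+1}\leqslant\overline{\varphi}_k$. The benefit of your route is that it is self-contained and exposes why the inequality holds; the benefit of the paper's route is brevity, since Schmidt and Summerer have already packaged that combinatorics into a citable inequality.
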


In the following section, we state Schmidt's original conjecture, and we justify the above reformulation of Moshchevitin's result. In section 3, we use the results of \cite[$\mathsection 4$]{Rp} to prove Theorem \ref{th2}. Finally, section 4 provides a proof of theorem \ref{th3} by using Schmidt and Summerer's parametric geometry of numbers.

\section{Link with Schmidt's Original Conjecture}
For each $N\in\bR$ with $N\geqslant1$ and for each $\xi=(1, \xi_{1},\ldots, \xi_{n})\in\bR^{n+1}$, Schmidt \cite{Sc} introduced the lattice  $\Lambda(\xi,N)\subset\bR^{n+1}$  generated by the vectors 
\[
\uv_{0}= (N^{-1},N^{1/n}\xi_{1},\ldots,N^{1/n}\xi_{n}), \,\,
\uv_{1}= (0,-N^{1/n},\,\ldots \,,0),\,\,
\ldots ,\,\,
\uv_{n}= (0,\,0,\,\ldots,\,-N^{1/n}),
\]
and defined
\[
 \mu_{j}(\xi, N)=\lambda_{j}(\cB;\Lambda(\xi,N)) \qquad \quad (1\leqslant j \leqslant n+1 )
\]
where
$
\cB= \left\lbrace (y_{0},y_{1},\ldots,y_{n})\in \bR^{n+1}; \,\,|y_{i}|\leqslant 1,\, i=0,\ldots,n\right\rbrace 
$
is the unit hypercube in $\bR^{n+1}$. 

With these notations, he conjectured the following result, later proven by Moshchevitin.

\begin{theorem}[N.~G.~Moshchevitin, $2012$]
\label{th4}
Let $k$ be an integer with $2\leqslant k\leqslant n$. There exists real numbers $\xi_{1},\ldots, \xi_{n} \in [0,1)$ such that
\begin{itemize}
\item[$\bullet$]
$1, \xi_{1},\ldots, \xi_{n}$ are linearly independent over $\bQ$;
\item[$\bullet$]
$\underset{N\rightarrow\infty}{\lim}\mu_{k-1}(\xi,N)=0\quad\text{and}\quad
\underset{N\rightarrow\infty}{\lim}\mu_{k+1}(\xi, N)=\infty$, where $\xi=(1,\xi_{1},\ldots, \xi_{n})$.
\end{itemize}
\end{theorem}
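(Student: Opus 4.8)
The plan is to obtain Theorem~\ref{th4} as a mere translation of Theorem~\ref{th1} into Schmidt's original normalization. Two harmless reductions come first. Replacing $\xi$ by a nonzero scalar multiple $c\xi$ turns $\cC_\xi(Q)$ into $\cC_\xi(|c|Q)$, hence shifts the parameter of every $L_{\xi,j}$ by the constant $\log|c|$; and replacing a coordinate $\xi_i$ of $\xi=(1,\xi_1,\ldots,\xi_n)$ by $\xi_i-m$ with $m\in\bZ$ is induced by a unipotent element of $\mathrm{GL}_{n+1}(\bZ)$ acting on $\bZ^{n+1}$, which changes every $L_{\xi,j}$ only by $O(1)$. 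Neither operation affects the divergences in Theorem~\ref{th1}, so the point it furnishes may be taken of the form $\xi=(1,\xi_1,\ldots,\xi_n)$ with $\xi_1,\ldots,\xi_n\in[0,1)$ and $1,\xi_1,\ldots,\xi_n$ linearly independent over $\bQ$ --- exactly the data appearing in Theorem~\ref{th4}.

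Next I would set up the dictionary between the $\mu_j(\xi,N)$ and the $L_{\xi,j}$. Write $\Lambda(\xi,N)=A\bZ^{n+1}$, where $A$ is the matrix with columns $\uv_0,\ldots,\uv_n$; it is triangular with $|\det A|=1$, so $\Lambda(\xi,N)$ is unimodular, and a direct computation gives
\[
A^{-1}\cB=\bigl\{\ux\in\bR^{n+1}:\ |x_0|\le N,\ |\xi_ix_0-x_i|\le N^{-1/n}\ (1\le i\le n)\bigr\},
\]
the classical convex body of simultaneous rational approximation to $\xi_1,\ldots,\xi_n$. Since $\mu_j(\xi,N)=\lambda_j(\cB;\Lambda(\xi,N))=\lambda_j(A^{-1}\cB;\bZ^{n+1})$, Mahler's inequality relating the successive minima of a symmetric convex body to those of its polar reciprocal body (the lattice $\bZ^{n+1}$ being self-dual) gives $\mu_j(\xi,N)\asymp\lambda_{n+2-j}\bigl((A^{-1}\cB)^{\circ};\bZ^{n+1}\bigr)^{-1}$ with implied constants depending only on $n$, while a computation of the polar body shows
\[
(A^{-1}\cB)^{\circ}=\mathrm{conv}\bigl\{\pm N^{-1}e_0,\ \pm N^{1/n}(\xi_ie_0-e_i):\ 1\le i\le n\bigr\},
\]
where $e_0,\ldots,e_n$ denotes the standard basis of $\bR^{n+1}$.

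The crucial step is the convex-geometry comparison of this cross-polytope with a dilate of $\cC_\xi$. Because $\xi\cdot(\xi_ie_0-e_i)=0$, the vectors $\xi_ie_0-e_i$ $(1\le i\le n)$ form a basis of the hyperplane $\xi^{\perp}$, so $\mathrm{conv}\{\pm N^{1/n}(\xi_ie_0-e_i)\}$ is a cross-polytope inside $\xi^{\perp}$ whose inradius and circumradius are both comparable to $N^{1/n}$; and since $e_0\cdot\xi=1$, the vertices $\pm N^{-1}e_0$ contribute an extent comparable to $N^{-1}$ in the direction of $\xi$. As $\cC_\xi(Q)$ has extent $\asymp 1$ across $\xi^{\perp}$ and $\asymp Q^{-1}$ along $\xi$, an elementary but somewhat involved sandwiching of the cross-polytope between two fixed multiples of the smooth body $N^{1/n}\cC_\xi\bigl(N^{(n+1)/n}\bigr)$ gives
\[
(A^{-1}\cB)^{\circ}\ \asymp\ N^{1/n}\,\cC_\xi\bigl(N^{(n+1)/n}\bigr)
\]
with constants depending only on $\xi$ and $n$ (a consistency check: both sides have volume $\asymp 1$, in agreement with $\mathrm{vol}\,\cC_\xi(e^q)\asymp e^{-q}$). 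Combining this with the previous step, taking logarithms, and setting $q=\tfrac{n+1}{n}\log N$ --- so that $\tfrac1n\log N=\tfrac{q}{n+1}$ --- one obtains, for every $j\in\{1,\ldots,n+1\}$,
\[
\log\mu_j(\xi,N)=\frac{q}{n+1}-L_{\xi,n+2-j}(q)+O(1)\qquad(N\to\infty).
\]

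Finally I would conclude by invoking Theorem~\ref{th1} with the index $k'=n-k+2$, which lies in $\{2,\ldots,n\}$ exactly when $2\le k\le n$: it yields a point $\xi=(1,\xi_1,\ldots,\xi_n)$ with $\bQ$-linearly independent coordinates for which $L_{\xi,k'-1}(q)-q/(n+1)\to-\infty$ and $L_{\xi,k'+1}(q)-q/(n+1)\to+\infty$ as $q\to\infty$. Since $k'-1=n+2-(k+1)$ and $k'+1=n+2-(k-1)$, the displayed identity turns these into $\log\mu_{k+1}(\xi,N)\to+\infty$ and $\log\mu_{k-1}(\xi,N)\to-\infty$, that is $\mu_{k+1}(\xi,N)\to\infty$ and $\mu_{k-1}(\xi,N)\to0$, which is precisely the statement of Theorem~\ref{th4}. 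The step I expect to be the main obstacle is the convex-geometry comparison $(A^{-1}\cB)^{\circ}\asymp N^{1/n}\cC_\xi(N^{(n+1)/n})$: one has to squeeze the cross-polytope spanned by $N^{-1}e_0$ and the $N^{1/n}(\xi_ie_0-e_i)$ between constant multiples of the smooth body $\cC_\xi$, carefully tracking the dependence of the constants on $\xi$ through $\|\xi\|$ and the shape of the basis $(\xi_ie_0-e_i)_{1\le i\le n}$; this dependence is harmless because $\xi$ is fixed, but it is where essentially all the bookkeeping lives.
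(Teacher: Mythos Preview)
Your proposal is correct and follows essentially the same route as the paper's Section~2: rewrite $\mu_j(\xi,N)$ as a successive minimum of the simultaneous-approximation box $\cK_\xi(N)=A^{-1}\cB$, apply Mahler duality, relate the dual body to $\cC_\xi$, and then invoke Theorem~\ref{th1} with the reflected index $n+2-k$. The one difference is that the paper bypasses the cross-polytope comparison you flag as the main obstacle by working not with the exact polar $(A^{-1}\cB)^{\circ}$ but with the comparable ``dual box'' $\cK_\xi^\star(N)=\{\ux:\ |\ux\cdot\xi|\le N^{-1},\ \|\ux\|\le N^{1/n}\}$, for which one has the exact identity $\cC_\xi(e^q)=e^{-q/(n+1)}\cK_\xi^\star(e^{nq/(n+1)})$, so no sandwiching is needed at all.
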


In fact, Schmidt's original conjecture omits the linear independence condition, but as Moshchevitin mentions in his article, (see \cite[$\mathsection 3$]{Mg}), the conjecture is trivial without this hypothesis.

To show the equivalence between Theorems \ref{th1} and \ref{th4}, fix a point $\xi=(1,\xi_{1},\ldots, \xi_{n})\in\bR^{n+1}$ whose coordinates are linearly independent over $\bQ$, and fix an integer $k$ with $2\leqslant k\leqslant n$. In \cite[$\mathsection 1$]{Mg}, Moshchevitin begins by observing that
\begin{equation*}
\label{m1}
\mu_{j}(\xi, N)= \lambda_{j}(\cK_{\xi}(N);\bZ^{n+1}) \qquad (N\geqslant 1,\,\, 1\leqslant j \leqslant n+1 ),
\end{equation*}
where
\[
\cK_{\xi}(N)=\left\lbrace (x_{0},\ldots,x_{n}) \in \bR^{n+1}\,;\,|x_{0}| \leqslant N \,,\,|x_{0}\xi_{j} -x_{j}| 
\leqslant N^{-1/n},\,\, j=1,\ldots,n \right\rbrace.
\]
Consequently, the second statement of theorem \ref{th4} can be rewritten as 
\begin{equation}
\label{e4}
\lim_{N \rightarrow \infty}\lambda_{k-1}(\cK_{\xi}(N);\bZ^{n+1})= 0 \quad \text{and} \quad \lim_{N \rightarrow \infty}
\lambda_{k+1}(\cK_{\xi}(N);\bZ^{n+1})=\infty.
\end{equation}

Meanwhile, Mahler's duality theorem yields 
\begin{equation*}
\lambda_{j}(\cK_{\xi}(N);\bZ^{n+1})\lambda_{n-j+2}(\cK_{\xi}^{\star}(N);\bZ^{n+1})\asymp 1 \qquad (1 \leqslant j\leqslant n+1),
\end{equation*}
where
\[
\cK_{\xi}^{\star}(N)= \left\lbrace \ux  \in \bR^{n+1}\,;\,|\ux\cdot \xi| \leqslant N^{-1} \,,\,||\ux|| \leqslant N^{1/n} \right\rbrace
\]
is essentially the convex body dual to $\cK_{\xi}(N)$.
Thus, the conditions in (\ref{e4}) become
\begin{equation}
\label{e5}
\lim_{N \rightarrow \infty}\lambda_{n+3-k}(\cK_{\xi}^{\star}(N);\bZ^{n+1})= \infty \quad \text{and} \quad \lim_{N \rightarrow \infty}
\lambda_{n+1-k}(\cK_{\xi}^{\star}(N);\bZ^{n+1})= 0.
\end{equation} 
On the other hand, since
$\cC_{\xi}(e^{q})=  e^{-q/(n+1)} \cK_{\xi}^{\star}(e^{nq/(n+1)})$, it follows that
\begin{equation*}
L_{\xi,j}(q)=\frac{q}{n+1} + \log\lambda_{j}(\cK_{\xi}^{\star}(e^{nq/(n+1)});\bZ^{n+1}) \qquad (1 \leqslant j\leqslant n+1) .
\end{equation*} 
Thus, the conditions in (\ref{e5}) can be rewritten as
\begin{equation*}
 \lim_{q\rightarrow \infty} \left( L_{\xi,n+3-k}(q)-\frac{q}{n+1}\right) = \infty \quad \text{and} \quad  \lim_{q\rightarrow \infty}
 \left( L_{\xi,n+1-k}(q)-\frac{q}{n+1} \right) = -\infty .
\end{equation*} 
The equivalence between theorems \ref{th1} and \ref{th4} follows.

\section{Proof of the Main Result}

In order to prove Theorem \ref{th2}, we need to etablish some prelimanary results which relie on the following basic construction.
\begin{proposition}
\label{pr1}
 Let $a,b,c,\alpha, \beta, \gamma \in (0,\infty)$ with $a<b<c$. There exists a unique choice of real numbers $r,s,t,u \in (0,\infty)$ with $r<s<t<u$ and a unique triplet of continuous and piecewise linear functions $(A,B,C)$ on $[r,u]$ such that the union of their graphs is as in Figure 1, i.e.
\begin{enumerate}[label=\roman{enumi}$)$]
\item
for all $q \in [r, u]$, we have
\begin{equation}
\label{p4}
 A(q)\leqslant B(q)\leqslant C(q)
\quad
 \text{and}
\quad
\frac{1}{\alpha} A(q)+\frac{1}{\beta} B(q)+\frac{1}{\gamma} C(q)=q ;
\end{equation}
 \item
the function $A$ is constant equal to $a$ on $[r, t]$, has slope $\alpha$ on $[t, u]$, and satisfies $A(u)=b $;
\item
the function $B$ has slope $\beta$ on $[r, s]$, is constant equal to $b$ on $[s, u]$, and satisfies $B(r)=a$;
\item
the function $C$ is constant equal to $b$ on $[r, s]$, has slope $\gamma$ on $[s, t]$, and is constant equal to $c$ on $[t, u]$.
 \end{enumerate}
 
 \begin{figure}[h!]
\begin{tikzpicture}
\draw[thick,domain=-4:4] plot (\x, {1});
\draw[ thick,domain=-4:8] plot (\x, {3});
\draw[  thick, domain=4:8] plot (\x, {5});
\draw[  thick, domain=-4:-2] plot (\x, \x+5);
\draw[  thick, domain=-2:4] plot (\x, 0.333*\x + 3.667);
\draw[  thick, domain=4:8] plot (\x, 0.5*\x-1);

\draw[fill] (-4,1) circle [radius=2pt];
\draw[fill] (-2,3) circle [radius=2pt];
\draw[fill] (4,5) circle [radius=2pt];
\draw[fill] (4,1) circle [radius=2pt];
\draw[fill] (8,3) circle [radius=2pt];

\draw [ thick, dashed] (-2,3)--(-2,0)node[below]{$s$ };
\draw [ thick, dashed] (4,5)--(4,0)node[below]{$t$ };
\draw [ thick, dashed] (8,5)--(8,0)node[below]{$u$ };
\draw [ thick, dashed] (-4,1)--(-4,0) node[below]{$r$ };

\draw[fill] (-4,1)      node[left]{ $a $};
\draw[fill] (-4,3)      node[left]{ $b $};
\draw[fill] (8,3)      node[right]{ $b$};
\draw[fill] (8,5)      node[right]{ $c$};

\draw[fill] (-2.9,2)      node[right]{ $B$};
\draw[fill] (2,3)      node[above]{ $B $};
\draw[fill] (1,1)      node[above]{\ $A $};
\draw[fill] (-3,3)      node[above]{  $C $};
\draw[fill] (6,5)      node[above]{\ $C $};

\draw[fill] (0.4,4)      node[left]{ $\gamma $};
\draw[fill] (-3.1,2)      node[left]{ $\beta$};
\draw[fill] (5.5,1.5)      node[right]{ $\alpha $};

\end{tikzpicture}
\caption{ }
\end{figure}

\end{proposition}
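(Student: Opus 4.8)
The plan is to read off from conditions (ii)–(iv) the values that $A,B,C$ are forced to take at the four abscissae $r,s,t,u$; this does double duty, giving both uniqueness and the recipe for the construction. Since $A\equiv a$ on $[r,t]$ we have $A(r)=A(s)=A(t)=a$, and the slope-$\alpha$ condition on $[t,u]$ together with $A(u)=b$ forces $u-t=(b-a)/\alpha$. Likewise $B(r)=a$, $B$ has slope $\beta$ on $[r,s]$ and is constant equal to $b$ on $[s,u]$, so $B(s)=B(t)=B(u)=b$ and $s-r=(b-a)/\beta$; and $C(r)=C(s)=b$, $C$ has slope $\gamma$ on $[s,t]$ and equals $c$ on $[t,u]$, so $C(t)=C(u)=c$ and $t-s=(c-b)/\gamma$. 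Plugging the triple $(A(r),B(r),C(r))=(a,a,b)$ into the linear relation of (\ref{p4}) pins down $r=\tfrac{a}{\alpha}+\tfrac{a}{\beta}+\tfrac{b}{\gamma}$, and then $s,t,u$ are determined by the three gap formulas just obtained. Because $a<b<c$ and $\alpha,\beta,\gamma>0$, these satisfy $0<r<s<t<u$. Thus any admissible data must be this one, which settles uniqueness.

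For existence I would simply \emph{define} $r,s,t,u$ by the formulas above and define $A,B,C$ on $[r,u]$ by the piecewise prescriptions of (ii)–(iv). Continuity at the interior breakpoints $s$ and $t$ is automatic, since the gap formulas were chosen precisely so that $a+\beta(s-r)=b$, $b+\gamma(t-s)=c$, and $a+\alpha(u-t)=b$. It then remains only to verify the two assertions in (i).

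For the linear identity in (\ref{p4}), the cleanest route is to check it at the four points $r,s,t,u$ — it holds at $r$ by the definition of $r$, and at $s,t,u$ one evaluates the already-known values of $(A,B,C)$ and recognizes the defining formulas for $s,t,u$ — and then invoke piecewise linearity of $\tfrac1\alpha A+\tfrac1\beta B+\tfrac1\gamma C$ on each of $[r,s],[s,t],[t,u]$. Equivalently one checks this combination has derivative $1$ on each subinterval: on $[r,s]$ only $B$ is non-constant, contributing $\tfrac1\beta\cdot\beta=1$; on $[s,t]$ only $C$, contributing $\tfrac1\gamma\cdot\gamma=1$; on $[t,u]$ only $A$, contributing $\tfrac1\alpha\cdot\alpha=1$. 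The inequalities $A\le B\le C$ are then checked interval by interval: on $[r,s]$, $A\equiv a\le B\le b\equiv C$ because $B$ rises from $a$ to $b$; on $[s,t]$, $A\equiv a\le b\equiv B\le C$ because $C$ rises from $b$ to $c$; on $[t,u]$, $A\le b\equiv B\le c\equiv C$ because $A$ rises from $a$ to $b$.

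There is no serious obstacle here; the proposition is essentially a bookkeeping exercise. The one point worth emphasizing is the consistency that makes everything fit: on each of the three subintervals exactly one of $A,B,C$ carries a slope, and that slope ($\beta$, resp.\ $\gamma$, resp.\ $\alpha$) is exactly the reciprocal of the coefficient multiplying that function in (\ref{p4}), which is why the relation, once true at $q=r$, propagates across $[r,u]$. Care should also be taken to record explicitly that $r>0$ and $r<s<t<u$, as this is where the hypotheses $a<b<c$ and $\alpha,\beta,\gamma>0$ enter.
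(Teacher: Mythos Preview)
Your proof is correct and follows essentially the same approach as the paper's: determine $r,s,t,u$ by evaluating the linear relation in (\ref{p4}) at the four breakpoints (the paper does this directly for all four, you do it for $r$ and then use the equivalent gap formulas), then construct $A,B,C$ and verify the identity via piecewise linearity. Your write-up is in fact slightly more thorough, as you explicitly check the ordering $A\le B\le C$, which the paper leaves implicit.
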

\begin{proof}
If there exist real numbers $r,s,t,u$ and functions $A,B,C$ as in the claim, then substituting $q$ by $r,s,t,u$ in the second condition of (\ref{p4}) yields, respectively, 
 \begin{align}
 \label{p5}
  r&= \frac{a}{\alpha} +\frac{a}{\beta} +\frac{b}{\gamma}   ; \qquad 
  s= \frac{a}{\alpha}  +\frac{b}{\beta} +\frac{b}{\gamma}   ; \qquad
  t= \frac{a}{\alpha}  +\frac{b}{\beta} +\frac{c}{\gamma}  ; \qquad 
  u= \frac{b}{\alpha} +\frac{b}{\beta} +\frac{c}{\gamma}  ,
 \end{align}
which uniquely determines them all.

Now, let $r,s,t,u$ be given by (\ref{p5}). Since $r<s<t<u$, there exists a unique triplet 
of continuous functions $(A,B,C)$  on $[r,u]$ with constant slopes on $[r,s]$, $[s,t]$ and $[t,u]$, and with
\begin{align*}
A(r)=A(s)=A(t)=a \quad & \text{and} \quad A(u)=b ,\\
B(r)=a \quad &\text{and} \quad B(s)=B(t)=B(u)=b,\\ 
C(r)=C(s)=b \quad &\text{and} \quad C(t)=C(u)=c.
\end{align*}
Thus, the function $F= \frac{1}{\alpha} A+\frac{1}{\beta} B+\frac{1}{\gamma} C$ is continuous and of constant slope on each of the interval $[r,s]$, $[s,t]$, and $[t,u]$. By construction, we have that  $F(q)=q$ for $q=r,s,t,u$. Thus,
\[
F(q)= q  \quad \text{for all $q\in[r,u]$}.
\]
Since $A$ and $C$ are constant on $[r,s]$, this implies that $B$ has slope $\beta$ on $[r,s]$. 
Similarly, we deduce that $C$ has slope $\gamma$ on $[s,t]$, and that $A$ has slope $\alpha$ on $[t,u]$.
\end{proof}

\begin{proposition}
\label{pr2}
With the same notation as above, suppose that $b/a<c/b$. Then, we have 
\begin{align}
\underset{\tiny q\in[r , u]}{\max}\frac{A(q)}{q}&= \frac{a}{r} \qquad \text{and} \qquad
\underset{\tiny q\in[r , u]}{\min}\frac{C(q)}{q}= \frac{b}{s} .
\end{align}
\end{proposition}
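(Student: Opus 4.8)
The plan is to analyze the quotients $A(q)/q$ and $C(q)/q$ on each of the three linearity intervals, using the elementary observation that on an interval inside $(0,\infty)$ a function of the form $q\mapsto \alpha q+d$ has quotient $\alpha+d/q$ by $q$, which is strictly decreasing when $d>0$ and strictly increasing when $d<0$; in particular the extrema of $A(q)/q$ and $C(q)/q$ occur at the break points $r,s,t,u$, all of which are positive since $r>0$.

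First I would handle $A$. On $[r,t]$ one has $A(q)=a$, so $A(q)/q=a/q$ is strictly decreasing; on $[t,u]$ one has $A(q)=\alpha q+(a-\alpha t)$, and from \eqref{p5} the coefficient $a-\alpha t$ equals $-(\alpha b/\beta+\alpha c/\gamma)<0$, so $A(q)/q$ is strictly increasing there. Hence $A(q)/q$ has a single interior minimum at $q=t$ and $\max_{q\in[r,u]}A(q)/q=\max\{A(r)/r,A(u)/u\}=\max\{a/r,\ b/u\}$. A short computation with \eqref{p5} gives $au-br=(ac-b^{2})/\gamma$, which is positive because the hypothesis $b/a<c/b$ is equivalent to $ac>b^{2}$; therefore $a/r>b/u$ and the maximum is $a/r$.

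Next I would handle $C$ in the same spirit. It equals $b$ on $[r,s]$, equals $\gamma q+(b-\gamma s)$ on $[s,t]$ with $b-\gamma s=-(\gamma a/\alpha+\gamma b/\beta)<0$, and equals $c$ on $[t,u]$. Thus $C(q)/q$ is strictly decreasing on $[r,s]$, strictly increasing on $[s,t]$, and strictly decreasing on $[t,u]$, so its minimum over $[r,u]$ is attained at one of the two valley points, namely $\min\{C(s)/s,C(u)/u\}=\min\{b/s,\ c/u\}$. Computing with \eqref{p5} yields $cs-bu=(ac-b^{2})/\alpha+b(c-b)/\beta$, and both terms are positive (the first by $ac>b^{2}$, the second by $c>b$), so $b/s<c/u$ and the minimum is $b/s$.

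The argument is essentially bookkeeping; the only delicate points are getting the signs of $a-\alpha t$ and $b-\gamma s$ right — these control the monotonicity on the sloped segments and hence the location of the extrema — and recognizing that the two endpoint comparisons both collapse to the single inequality $ac>b^{2}$ (together with the auxiliary $c>b$ in the second case). I do not anticipate any real obstacle beyond this.
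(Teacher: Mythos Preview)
Your proof is correct and follows essentially the same strategy as the paper: determine the monotonicity of $A(q)/q$ and $C(q)/q$ on each linearity subinterval, then compare the endpoint values. The paper records the needed endpoint inequalities as $a/t<b/u<a/r$ and $b/s<b/r<c/u<c/t$ (deduced from \eqref{p5}) and justifies the monotonicity via $a/r<\alpha$ and $b/s<\gamma$, whereas you compute the affine constants and the differences $au-br$, $cs-bu$ explicitly; these are the same verifications spelled out in more detail.
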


\begin{proof}
First, using (\ref{p5}) note that
\begin{equation*}
\label{p6}
\frac{a}{t}< \frac{b}{u}<\frac{a}{r}  \quad \text{and} \quad \frac{b}{s}<\frac{b}{r}<\frac{c}{u}<\frac{c}{t}.
\end{equation*}
Since $a/r < \alpha$ and $b/s<\gamma$, it follows that the ratio $A(q)/q$ is decreasing on $[r,t]$ and increasing on $[t,u]$, and that the ratio $C(q)/q$ is decreasing on $[r,s]$, increasing on $[s,t]$ and decreasing on $[t,u]$. The conclusion follows straightforwardly.
\end{proof}

Let $\Delta$ denote the set of sequences $(a_{m})_{m\in \bZ}$ of positive reals which satisfy
 \begin{align*}
\quad \quad \quad 1<  \frac{a_{m+1}}{a_{m}}< \frac{a_{m+2}}{a_{m+1}} \qquad (m\in \bZ),\\
  & \\
 \underset{m\rightarrow-\infty}{\lim}a_{m}=0\quad \text{and} \quad \underset{m \rightarrow\infty}{\lim}
\frac{a_{m+1}}{a_{m}}=+\infty.
\end{align*} 
The following result further extends the preceding propositions.

\begin{proposition}
\label{pr3}
Let $(a_{m})_{m\in \bZ}\in \Delta $ and let $\alpha, \beta, \gamma \in (0,\infty)$. Define 
\begin{equation}
\label{r1}
r_{m}=\frac{a_{m}}{\alpha} +\frac{a_{m}}{\beta} +\frac{a_{m+1}}{\gamma} \qquad (m\in \bZ).
\end{equation}
Then, there exists a unique triplet of continuous and piecewise linear functions $(A,B,C)$ on $(0,\infty)$ whose restriction to the interval $[r_{m}, r_{m+1}]$ fulfills the conditions of Proposition \ref{pr1} with $a=a_{m}$, $b=a_{m+1}$ and $c=a_{m+2}$ for each $m\in\bZ$. Moreover, we have 
\begin{equation}
\label{p8}
\underset{q\rightarrow \infty}{\lim} A(q)= \infty , \quad
\underset{q\rightarrow \infty}{\limsup}\,\frac{A(q)}{q}=0 \quad \text{and} \quad
\underset{q\rightarrow \infty}{\liminf} \,\frac{C(q)}{q}=\frac{\beta\gamma}{\beta +\gamma}.
\end{equation}
\end{proposition}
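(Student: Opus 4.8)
The plan is to obtain the global functions by gluing together the local pieces produced by Proposition \ref{pr1}, and then to read off the three asymptotic statements from the per-interval extrema of Proposition \ref{pr2}. First I would record the elementary consequences of $(a_m)_{m\in\bZ}\in\Delta$: since $a_{m+1}/a_m>1$ for all $m$, the sequence $(a_m)$ is strictly increasing, with $a_m\to0$ as $m\to-\infty$ and $a_m\to\infty$ as $m\to\infty$ (the latter because $a_{m+1}/a_m\to\infty$). In particular $a_m<a_{m+1}<a_{m+2}$ for every $m$, so Proposition \ref{pr1} applies with $(a,b,c)=(a_m,a_{m+1},a_{m+2})$; comparing \eqref{r1} with the first and last equalities of \eqref{p5} shows that the interval it produces is exactly $[r_m,r_{m+1}]$, with intermediate breakpoints $s_m=\frac{a_m}{\alpha}+\frac{a_{m+1}}{\beta}+\frac{a_{m+1}}{\gamma}$ and $t_m=\frac{a_m}{\alpha}+\frac{a_{m+1}}{\beta}+\frac{a_{m+2}}{\gamma}$. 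From $a_m<a_{m+1}<a_{m+2}$ one gets $r_m<r_{m+1}$, so $(r_m)$ is strictly increasing, and since $a_m,a_{m+1}\to0$ as $m\to-\infty$ while $a_m\to\infty$ as $m\to\infty$, we have $r_m\to0$ and $r_m\to\infty$; thus $(0,\infty)=\bigcup_{m\in\bZ}[r_m,r_{m+1}]$.

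Next I would glue. Write $(A_m,B_m,C_m)$ for the triplet furnished by Proposition \ref{pr1} on $[r_m,r_{m+1}]$. Conditions ii)--iv) give the endpoint values $A_m(r_{m+1})=B_m(r_{m+1})=a_{m+1}$, $C_m(r_{m+1})=a_{m+2}$, and $A_{m+1}(r_{m+1})=B_{m+1}(r_{m+1})=a_{m+1}$, $C_{m+1}(r_{m+1})=a_{m+2}$ (here the value $c=a_{m+2}$ of the $m$-th block matches the value $b=a_{m+2}$ of the $(m+1)$-th block). Hence the local triplets agree at every junction $r_{m+1}$ and patch together into a single triplet $(A,B,C)$ of continuous, piecewise linear functions on $(0,\infty)$ restricting to $(A_m,B_m,C_m)$ on each $[r_m,r_{m+1}]$. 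Uniqueness is immediate: any triplet with the stated property must restrict on $[r_m,r_{m+1}]$ to the triplet of Proposition \ref{pr1}, which is unique.

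For the asymptotics: on $[r_m,t_m]$ the function $A$ is constant equal to $a_m$ and on $[t_m,r_{m+1}]$ it has positive slope $\alpha$, so $A$ is non-decreasing on $(0,\infty)$; since $A(r_m)=a_m\to\infty$, we get $A(q)\to\infty$. Because the first condition in $\Delta$ reads $a_{m+1}/a_m<a_{m+2}/a_{m+1}$, i.e. $b/a<c/b$ for each block, Proposition \ref{pr2} gives, on each piece,
\[
\max_{q\in[r_m,r_{m+1}]}\frac{A(q)}{q}=\frac{a_m}{r_m}=\Bigl(\frac1\alpha+\frac1\beta+\frac1\gamma\cdot\frac{a_{m+1}}{a_m}\Bigr)^{-1},
\qquad
\min_{q\in[r_m,r_{m+1}]}\frac{C(q)}{q}=\frac{a_{m+1}}{s_m}=\Bigl(\frac1\alpha\cdot\frac{a_m}{a_{m+1}}+\frac1\beta+\frac1\gamma\Bigr)^{-1}.
\]
Since $a_{m+1}/a_m$ increases to $\infty$, the quantity $a_m/r_m$ decreases to $0$, so $\sup_{q\ge r_m}A(q)/q=\sup_{m'\ge m}a_{m'}/r_{m'}=a_m/r_m\to0$, whence $\limsup_{q\to\infty}A(q)/q=0$. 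Likewise $a_m/a_{m+1}$ decreases to $0$, so $a_{m+1}/s_m$ increases to $(1/\beta+1/\gamma)^{-1}=\beta\gamma/(\beta+\gamma)$, giving $\inf_{q\ge r_m}C(q)/q=\inf_{m'\ge m}a_{m'+1}/s_{m'}=a_{m+1}/s_m\to\beta\gamma/(\beta+\gamma)$, hence $\liminf_{q\to\infty}C(q)/q=\beta\gamma/(\beta+\gamma)$. This establishes \eqref{p8}.

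There is no deep difficulty here, the real content being contained in Propositions \ref{pr1} and \ref{pr2}. The point demanding the most care is the passage from the per-interval extrema to the global $\limsup$ and $\liminf$: this uses the monotonicity in $m$ of $a_m/r_m$ and of $a_{m+1}/s_m$, which in turn comes from the monotonicity of the ratios $a_{m+1}/a_m$ built into the definition of $\Delta$. The only other thing to watch is the index bookkeeping in the gluing, namely that the top value $c$ of the $m$-th block ($=a_{m+2}$) equals the middle value $b$ of the $(m+1)$-th block.
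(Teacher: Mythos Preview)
Your proof is correct and follows essentially the same approach as the paper: glue the local triplets from Proposition~\ref{pr1} at the common endpoints $r_m$, then read off the asymptotics from the per-interval extrema of Proposition~\ref{pr2}. Your version is in fact a bit more careful than the paper's, explicitly verifying that $\bigcup_{m\in\bZ}[r_m,r_{m+1}]=(0,\infty)$, that $A(q)\to\infty$, and that the monotonicity of $a_{m+1}/a_m$ makes the sequences $a_m/r_m$ and $a_{m+1}/s_m$ monotone, which cleanly justifies the passage from per-interval extrema to the global $\limsup$ and $\liminf$.
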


\begin{proof}
Let $(a_{m})_{m\in \bZ}\in \Delta$, and define
\begin{equation}
\label{k1}
s_{m}=\frac{a_{m}}{\alpha} +\frac{a_{m+1}}{\beta} +\frac{a_{m+1}}{\gamma}  \quad \text{and} \quad
 t_{m}=\frac{a_{m}}{\alpha} +\frac{a_{m+1}}{\beta} +\frac{a_{m+2}}{\gamma} \qquad (m\in \bZ).
\end{equation}
By setting $a=a_{m}$, $b=a_{m+1}$ and $c=a_{m+2}$, Proposition \ref{pr1} and (\ref{p5}) yield for each $m\in\bZ$ a triplet of continuous and piecewise linear functions $(A^{(m)},B^{(m)},C^{(m)})$ on $[r,u]= [r_{m}, r_{m+1}]$.

Since the triplets $(A^{(m-1)},B^{(m-1)},C^{(m-1)})$ and $(A^{(m)},B^{(m)},C^{(m)})$
coincide at the point $r_m$  and are equal to $(a_m , a_m, a_{m+1})$ for each $m\in \bZ$, it follows that the sequence of triplets of functions $(A^{(m)},B^{(m)},C^{(m)})$ with $m\in \bZ$ determine a unique triplet of continuous and piecewise linear functions $(A,B,C)$ on $\underset{m \in \bZ}{\cup}[r_{m}, r_{m+1}]=(0,\infty)$. Now, Proposition \ref{pr2} gives 
\begin{equation}
\label{r3}\underset{\tiny q\in[r_{m} , r_{m+1}]}{\max}\small \frac{A(q)}{q}=
\frac{a_{m}}{r_m}
\quad \text{and} \quad
\underset{\tiny q\in[r_{m} , r_{m+1}]}{\min} \small \frac{C(q)}{q}=\small
\frac{a_{m+1}}{s_m},
\end{equation}
and so
\begin{align*}
\limsup_{q\rightarrow \infty}\frac{A(q)}{q}&=  \underset{m\rightarrow\infty}{\lim}
\frac{a_{m}}{r_m}=0 
 \quad \text{and} \quad
\liminf_{q\rightarrow \infty} \frac{C(q)}{q}=\underset{m\rightarrow\infty}{\lim}
\frac{a_{m+1}}{s_m}=\frac{\beta\gamma}{\beta +\gamma}.
\end{align*}
\end{proof}

Our next result uses the notion of \emph{generalized $(n+1)$-system} introduced by D.~Roy in \cite{Rp}. It provides a good approximation of the functions  $\uL_{\xi}$ for non-zero point $\xi \in \bR^{n+1}$ (see \cite{Rp} for more details). We recall here the definition .
\begin{definition}
Let $I$ be a subinterval of $ [0,\infty)$ with non-empty interior. A \emph{generalized $(n+1)$-system} on $I$ is a map $ \uP =(P_{1},\ldots,P_{n+1}): I\longrightarrow \bR^{n+1}$ with the following properties.
\begin{enumerate}
\item[(G1)]
For each $q \in I$, we have  $0\leqslant P_{1}(q)\leqslant\cdots \leqslant P_{n+1}(q)$ and $P_{1}(q)+\cdots + P_{n+1}(q)=q$.
\item[$\text{(G2)}$]
If  $H$ is a non-empty open subinterval of $I$ on which $\uP$ is differentiable, then there are integers 
$\underline{r},\overline{r}$ with $1\leqslant \underline{r} \leqslant \overline{r} \leqslant n+1$ such that $P_{\underline{r}},P_{\underline{r}+1},\ldots,P_{\overline{r}}$ coincide on the whole interval
$H$ and have slope $1/(\overline{r}-\underline{r}+1)$ while any other component $P_{j}$ of $P$ is constant on $H$.
\item[(G3)]
If $q$ is an interior point of $I$ at which $ \uP $ is not differentiable, 
if $\underline{r},\overline{r}$, $\underline{s},\overline{s}$ are the integers for which 
\begin{equation}
\label{e8}
P'_{j}(q^{-})=\frac{1}{\overline{r}-\underline{r}+1} \quad (\underline{r}\leqslant j\leqslant \overline{r}) 
\quad \text{et} \quad P'_{j}(q^{+})=\frac{1}{\overline{s}-\underline{s}+1} \quad (\underline{s}\leqslant j\leqslant \overline{s}) 
\end{equation}
and if  $\underline{r}\leqslant \overline{s}$, then we have $P_{\underline{r}}(q)=P_{\underline{r}+1}(q)=\cdots=P_{\overline{s}}(q)$.
\end{enumerate}
\end{definition} 

We now combine the previous Propositions to etablish the following result.

\begin{proposition}
\label{pr4}
Let $k$ be an integer with $2\leqslant k\leqslant n$. With the notation of Proposition \ref{pr3}, suppose that $\alpha= 1/(k-1)$, $\beta=1$ and $\gamma=1/(n+1-k)$. For all $q>0$, let
\[
P_{1}(q)=\cdots=P_{k-1}(q)=A(q),\quad P_{k}(q)=B(q) \quad \text{and} \quad P_{k+1}(q)=\cdots=P_{n+1}(q).
\]
Then the function $\uP: (0,\infty) \longrightarrow \bR^{n+1}$ defined by
\[
\uP(q):=(P_{1}(q),\ldots,P_{n+1}(q)) \qquad (q> 0)
\]
is an \emph{generalized $(n+1)$-system} on $(0, \infty)$. Moreover, we have 
\begin{equation*}
\underset{q\rightarrow \infty}{\lim} P_{1}(q)= \infty , \quad
\limsup_{q\rightarrow \infty}\frac{P_{k-1}(q)}{q}=0 \quad \text{and} \quad \liminf_{q\rightarrow 
\infty}\frac{P_{k+1}(q)}{q}= 
\frac{1}{n-k+2} .
\end{equation*}
\end{proposition}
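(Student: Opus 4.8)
The plan is to verify the three defining properties (G1), (G2), (G3) of a generalized $(n+1)$-system directly from the structure of $(A,B,C)$ furnished by Proposition \ref{pr3}, and then to read off the asymptotic statements from \eqref{p8}.

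\textbf{Step 1: Property (G1).} For each $q>0$, the inequalities $A(q)\leqslant B(q)\leqslant C(q)$ (valid on each $[r_m,r_{m+1}]$ by Proposition \ref{pr1}(i)) give $P_1(q)\leqslant\cdots\leqslant P_{n+1}(q)$, since $P_1=\cdots=P_{k-1}=A$, $P_k=B$, and $P_{k+1}=\cdots=P_{n+1}=C$. For the normalization, note that there are $k-1$ copies of $A$, one copy of $B$, and $n+1-k$ copies of $C$, so
\[
\sum_{j=1}^{n+1}P_j(q)=(k-1)A(q)+B(q)+(n+1-k)C(q)=\frac{1}{\alpha}A(q)+\frac{1}{\beta}B(q)+\frac{1}{\gamma}C(q)=q
\]
by the choice $\alpha=1/(k-1)$, $\beta=1$, $\gamma=1/(n+1-k)$ together with the second identity in \eqref{p4}. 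Also $P_1(q)=A(q)\geqslant 0$, so (G1) holds.

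\textbf{Step 2: Properties (G2) and (G3).} On each of the open subintervals of $(0,\infty)$ obtained by splitting $[r_m,r_{m+1}]$ at $s_m$ and $t_m$, exactly one of $A,B,C$ is non-constant, with slope equal to the corresponding $\alpha,\beta$ or $\gamma$. One checks case by case: on $(r_m,s_m)$ only $B$ moves, with slope $\beta=1$, so the moving block is $\{P_k\}$ of size $1$, slope $1/1$, consistent with (G2); on $(s_m,t_m)$ only $C$ moves, with slope $\gamma=1/(n+1-k)$, so the moving block is $\{P_{k+1},\dots,P_{n+1}\}$ of size $n+1-k$, slope $1/(n+1-k)$; on $(t_m,r_{m+1})$ only $A$ moves, with slope $\alpha=1/(k-1)$, so the moving block is $\{P_1,\dots,P_{k-1}\}$ of size $k-1$, slope $1/(k-1)$. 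All other components are constant on these intervals, so (G2) holds. For (G3) we examine the breakpoints $s_m$, $t_m$ and $r_m$. At $s_m$ the active block switches from $\{P_k\}$ to $\{P_{k+1},\dots,P_{n+1}\}$, and since $B(s_m)=C(s_m)=a_{m+1}$ we have $P_k(s_m)=\cdots=P_{n+1}(s_m)$, giving the required equality $P_{\underline r}(q)=\cdots=P_{\overline s}(q)$ with $\underline r=k$, $\overline s=n+1$. At $t_m$ the block switches from $\{P_{k+1},\dots,P_{n+1}\}$ to $\{P_1,\dots,P_{k-1}\}$; here $\overline s=k-1<k=\underline r$, so the hypothesis $\underline r\leqslant\overline s$ of (G3) is not met and there is nothing to verify. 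At $r_m$ the block switches from $\{P_1,\dots,P_{k-1}\}$ to $\{P_k\}$, and since $A(r_m)=B(r_m)=a_m$ we have $P_{k-1}(r_m)=P_k(r_m)$, which is exactly the needed equality with $\underline r=1$, $\overline s=k$. Hence (G3) holds, and $\uP$ is a generalized $(n+1)$-system on $(0,\infty)$.

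\textbf{Step 3: The asymptotics.} These follow immediately from Proposition \ref{pr3}: $\lim_{q\to\infty}P_1(q)=\lim_{q\to\infty}A(q)=\infty$; $\limsup_{q\to\infty}P_{k-1}(q)/q=\limsup_{q\to\infty}A(q)/q=0$; and $\liminf_{q\to\infty}P_{k+1}(q)/q=\liminf_{q\to\infty}C(q)/q=\beta\gamma/(\beta+\gamma)$. Substituting $\beta=1$ and $\gamma=1/(n+1-k)$ gives $\beta\gamma/(\beta+\gamma)=(1/(n+1-k))/(1+1/(n+1-k))=1/(n+2-k)$, as claimed.

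The only genuinely delicate point is the case analysis for (G3) at the breakpoints, where one must confirm both that the two one-sided derivative blocks are correctly identified and that the continuity relations $A(r_m)=B(r_m)$ and $B(s_m)=C(s_m)$ coming from the glueing in Proposition \ref{pr3} supply exactly the collision of components required; everything else is bookkeeping with the definitions and an elementary simplification of $\beta\gamma/(\beta+\gamma)$.
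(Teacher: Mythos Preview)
Your proof is correct and follows essentially the same approach as the paper's: verify (G1), (G2), (G3) by inspecting the three subintervals and the three breakpoints of each $[r_m,r_{m+1}]$, then read the asymptotics directly from \eqref{p8}. One harmless slip: at $t_m$ the left-hand moving block is $\{P_{k+1},\dots,P_{n+1}\}$, so $\underline r=k+1$ (not $k$ as you wrote); the inequality $\overline s=k-1<\underline r$ and the conclusion remain unchanged.
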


\begin{proof}
The components $P_{1},\ldots,P_{n+1}$ of $\uP$ are continuous and piecewise linear on $(0, \infty)$. They satisfy
\[
0\leqslant P_{1}(q)\leqslant\cdots \leqslant P_{n+1}(q) \quad \text{and} \quad P_{1}(q)+\cdots + P_{n+1}(q)=q  \qquad (q > 0).
\]
The function $\uP$ is differentiable on $(0, \infty)$ except at the points $r_{m},s_{m},t_{m}$ given by (\ref{r1}) and (\ref{k1}). On each of the interval $[r_{m}, s_{m}]$, $[s_{m},t_{m}]$, $[t_{m},r_{m+1}]$, the components $P_{1},\ldots,P_{n+1}$ are constant except for few, say $h$ of them, which coincide on the interval and which have slope $1/h$. At the point $r_{m}$, the slopes of $P_{1},\ldots,P_{k-1}$ go from $1/(k-1)$ to $0$, while the slope of $P_{k} $ goes from $0$ to $1$, and all these functions take the same value, i.e. 
\[
P_{1}(r_{m})=\cdots=P_{k}(r_{m}) \qquad (m\in \bZ).
\]
At the point $s_m$, the function $P_{k} $ goes from slope $1$ to slope $0$, while the slopes of $P_{k+1},\ldots,P_{n+1} $ go from $0$ to $1/(n-k+1)$ , and similary.
\[
P_{k}(s_{m})=P_{k+1}(s_{m})=\cdots=P_{n+1}(s_{m}) \qquad (m\in \bZ).
\]
Finally, at the point $t_{m}$, the slopes of  $P_{k+1},\ldots,P_{n+1}$ go from $1/(n-k+1)$  to  $0$, while the slopes of $P_{1},\ldots,P_{k-1} $  go from $0$ to  $1/(k-1)$, and we have  
\[
P_{1}(t_{m})=\cdots= P_{k-1}(t_{m})< P_{k}(t_{m})<P_{k+1}(t_{m}) =\cdots =P_{n}(t_m) \qquad (m\in \bZ).
\]
Therefore, the function $\uP$ is an \emph{generalized $(n+1)$-system} on $(0, \infty)$. The second assertion of the proposition follows from (\ref{p8}).
\end{proof}

In \cite[$\mathsection 4$]{Rp}, D.~Roy shows that for each \emph{generalized $(n+1)$-system} $\uP$ on $[q_{0}, \infty)$ with $q_{0}\geqslant 0$, there exists a non-zero point $\xi$ of $\bR^{n+1}$ such that the difference $\uL_{\xi}-\uP$ is bounded. Then, we have 
\[
\limsup_{q\rightarrow \infty} \frac{L_{\xi,j}(q)}{q}= \limsup_{q\rightarrow \infty}\frac{P_{j}(q)}{q}
\quad \text{and} \quad
\liminf_{q\rightarrow \infty}\frac{L_{\xi,j}(q)}{q}=\liminf_{q\rightarrow \infty}\frac{P_{j}(q)}{q} 
\qquad (1\leqslant j \leqslant n+1).
\]
In the context of Proposition \ref{pr4}, this guarantees the existence of a point $\xi \in \bR^{n+1}$ with 
\begin{align}
\label{m1}
\limsup_{q\rightarrow \infty}\frac{L_{\xi,k-1}(q)}{q}&=0
\quad \text{and} \quad 
\liminf_{q\rightarrow \infty}\frac{L_{\xi,k+1}(q)}{q}= \frac{1}{n-k+2}.
\end{align}
Moreover, since $\underset{q\rightarrow \infty}{\lim} P_{1}(q)= \infty $, the function $L_{\xi,1}$ 
is unbounded. It follows that $\xi$ is a point whose coordinates are linearly independent over $\bQ$. 

To finish the proof, it remains to show that one can construct uncountably many such points. For each $\theta \in (0,\infty)$, we define 
\[
a^{(\theta)}_{m}=\theta2^{m^{3}} \qquad (m\in \bZ).
\]
Then, the sequence $\big(a^{(\theta)}_{m}\big)_{m\in \bZ}$ belongs to $\Delta$, and Propositions \ref{pr3} and \ref{pr4} associate to it an \emph{generalized $(n+1)$-system} $\uP^{(\theta)}$ on $(0,\infty)$, and a point $\xi^{(\theta)}\in\bR^{n+1}$. Extending the notation in an obvious manner gives
\begin{align*}
 r^{(\theta)}_{m}&=k a^{(\theta)}_{m}+(n-k+1) a^{(\theta)}_{m+1} < (n+1) a^{(\theta)}_{m+1}\\
t^{(\theta)}_{m}&= (k-1) a^{(\theta)}_{m} +a^{(\theta)}_{m+1}+ (n-k+1) a^{(\theta)}_{m+2} > a^{(\theta)}_{m+2}
\end{align*}
for all $m \in \bZ$, and $t^{(\theta)}_{m}/r^{(\theta)}_{m}$ tends to infinity with $m$. Thus, if $\theta,\theta' \in (0,\infty)$ with $\theta< \theta'$, then 
\[
 r^{(\theta)}_{m}< r^{(\theta')}_{m}= (\theta'/\theta ) r^{(\theta)}_{m} <  t^{(\theta)}_{m},
\]
for all sufficiently large $m \in \bZ$, and so
\begin{equation*}
\label{m5}
 \Vert\uP^{(\theta')}(r^{(\theta')}_{m})-\uP^{(\theta)}(r^{(\theta')}_{m})\Vert 
 \geqslant |P^{(\theta')}_{1}(r^{(\theta')}_{m})-P^{(\theta)}_{1}(r^{(\theta')}_{m})|=|a^{(\theta')}_{m} -a^{(\theta)}_{m}|=
(\theta' -\theta) 2^{m^3}.
\end{equation*}
This means that the difference $\uP^{(\theta')}-\uP^{(\theta)}$ is unbounded. Thus, the points $\xi^{(\theta')}$ and $\xi^{(\theta)}$ are distinct, and consequently, the map $\theta \mapsto \xi^{(\theta)}$ is injective on $(0,\infty)$. Its image is therefore uncountable.

\section{Proof of Theorem 3} 

Let $\xi$ be a point in $\bR^{n+1}$ whose coordinates are linearly independent over $\bQ$. On the model of Schmidt and Summerer in \cite[$\mathsection 1$]{S2009}, we define 
\begin{equation*}
\underline{\varphi}_{j}(\xi)=\liminf_{q\rightarrow \infty}\frac{L_{\xi,j}(q)}{q} \quad\text{and} \quad 
\overline{\varphi}_{j}(\xi)=\limsup_{q\rightarrow \infty}\frac{L_{\xi,j}(q)}{q} \qquad (1 \leqslant j\leqslant n+1).
\end{equation*} 
In \cite[$\mathsection 1$]{S2009}, Schmidt and Summerer show that these quantities satisfy
\begin{equation}
\label{p3}
\underline{\varphi}_{j+1}(\xi)\leqslant \overline{\varphi}_{j}(\xi) \qquad (1\leqslant j \leqslant n) .
\end{equation}

Now, suppose that $\overline{\varphi}_{k-1}(\xi)=0$ for some integer $k$ with $2\leqslant k\leqslant n$. Since $q- \sum\limits_{j=1}^{n+1} L_{\xi,j}(q)$ is a bounded function in $q$ on $(0,\infty)$, we have that
\begin{align*}
(n-k+2)\overline{\varphi}_{k}(\xi) &\leqslant \underset{q\rightarrow\infty}{\limsup}\,\, \frac{1}{q} \sum\limits_{j=k}^{n+1} L_{\xi,j}(q)
= \underset{q\rightarrow\infty}{\limsup} \,\, \frac{1}{q} \left( q - \sum\limits_{j=1}^{k-1} L_{\xi,j}(q) \right)=1 ,
\end{align*}
and so $\overline{\varphi}_{k}(\xi) \leqslant 1/(n-k+2)$. This yields 
 $\underline{\varphi}_{k+1}(\xi)\leqslant 1/(n-k+2)$.

\end{document}